\def \tr {\mathop{\rm tr}\nolimits}
\def \re {\mathop{\rm Re}\nolimits}
\def \E {\mathop{\rm E}\nolimits}
\def \Vec {\mathop{\rm vec}\nolimits}
\def \Cov {\mathop{\rm Cov}\nolimits}
\def \etr {\mathop{\rm etr}\nolimits}
\def \diag {\mathop{\rm diag}\nolimits}
\def \build#1#2#3{\mathrel{\mathop{#1}\limits^{#2}_{#3}}}
\renewenvironment{abstract}
                 {\vspace{6pt}
                  \begin{center}
                  \begin{minipage}{5in}
                  \centerline{\textbf{Abstract}}
                  \noindent\ignorespaces
                 }
                 {\end{minipage}\end{center}}
\newtheorem{thm}{\textbf{Theorem}}[section]
\newtheorem{lem}{\textbf{Lemma}}[section]
\newtheorem{prop}{\textbf{Proposition}}[section]
\theoremstyle{definition}
\newtheorem{defn}{\textbf{Definition}}[section]
\newtheorem{rem}{\textbf{Remark}}[section]
\title{\Large \textbf{Moments of the Riesz distribution}}
\author{
  \textbf{Jos\'e A. D\'{\i}az-Garc\'{\i}a} \thanks{Corresponding author\newline
   {\bf Key words.}  Wishart distribution; Riesz distribution, random matrix, expectation, variance-covariance matrix.\newline
    2000 Mathematical Subject Classification. Primary 60E05, 62E15; secondary
    60E10}\\
  {\normalsize Department of Statistics and Computation} \\
  {\normalsize 25350 Buenavista, Saltillo, Coahuila, Mexico} \\
  {\normalsize E-mail: jadiaz@uaaan.mx} \\
}
\date{}
\begin{document}
\maketitle

\begin{abstract}
This article derives the first two moments of the two versions of the Riesz distribution in the
terms of their characteristic functions.
\end{abstract}

\section{Introduction}\label{sec1}

There is no doubt about the important role played by Wishart distribution in the context of
multivariate statistics and random matrix theory. Based in the Riesz measure, \citet{hl:01}
proposed a generalisation of the Wishart distribution, which they termed Riesz distribution.
Some of their main properties and related distributions have been studied by \citet{hl:01},
\citet{fk:94} and \citet{hlz:05}. Recently, \citet{dg:12} proposes two versions of the Riesz
distribution for real normed division algebras and some of their properties are also being
studied.

In particular, the characteristic function was obtained for both versions of the Riesz
distribution, but a topic that has been disregarded, is the study of its moments. As indicated
in the conclusions section, in particular, these moments can be used to study the asymptotic
normality of the Riesz distribution.

This article studies the first two moments for the two versions of the Riesz distribution.
Section \ref{sec2} reviews some definitions and notations on the matrix algebra and some
special functions with matrix arguments on real symmetric cones, also, are summarised the two
definitions of the Riesz distribution, their corresponding characteristic functions and the
section is finalised with obtaining   the first two matrix derivatives of the characteristic
functions. The main results are proposed in Section \ref{sec3}.

\section{Preliminary results}\label{sec2}

\subsection{Matrix algebra and special function with matrix argument}\label{sec21}

A detailed discussion of theory of matrices and special function with matrix argument can  be
found in \citet{mn:88} and \citet{m:82}, respectively. For convenience, we shall introduce some
notation, although in general, we adhere to standard notation forms.

Let $\mathfrak{A}^{m \times n}$ be the set of all $m \times n$ matrices over $\Re$. Let
$\mathbf{A} \in \mathfrak{A}^{m \times n}$, then $\mathbf{A}' \in \mathfrak{A}^{n \times m}$
denotes the transpose. It is denoted by ${\mathfrak S}_{m}$ the real vector space of all
$\mathbf{S} \in \mathfrak{A}^{m \times m}$ such that $\mathbf{S} = \mathbf{S}^{'}$. In
addition, let $\mathfrak{P}_{m}$ be the \emph{cone of positive definite matrices} $\mathbf{S}
\in \mathfrak{A}^{m \times m}$. Thus, $\mathfrak{P}_{m}$ consist of all matrices $\mathbf{S} =
\mathbf{X}^{'}\mathbf{X}$, with $\mathbf{X} \in \mathfrak{A}^{n \times m}$; then
$\mathfrak{P}_{m}$ is an open subset of ${\mathfrak S}_{m}$.

$\Gamma_{m}[a]$ denotes the multivariate \emph{Gamma function} for the space
$\mathfrak{S}_{m}$. This can be obtained as a particular case of the \emph{generalised gamma
function of weight $\kappa$} for the space $\mathfrak{S}_{m}$ with $\kappa = (k_{1}, k_{2},
\dots, k_{m})$, $k_{1}\geq k_{2}\geq \cdots \geq k_{m} \geq 0$, taking $\kappa =(0,0,\dots,0)$
and which for $\re(a) \geq (m-1)/2 - k_{m}$ is defined by \citep[see][]{gr:87},
\begin{eqnarray}\label{int1}
  \Gamma_{m}[a,\kappa] &=& \displaystyle\int_{\mathbf{A} \in \mathfrak{P}_{m}}
  \etr\{-\mathbf{A}\} |\mathbf{A}|^{a-(m-1)/2 - 1} q_{\kappa}(\mathbf{A}) (d\mathbf{A}) \\
&=& \pi^{m(m-1)/4}\displaystyle\prod_{i=1}^{m} \Gamma[a + k_{i}
    -(i-1)/2]\nonumber\\ \label{gammagen1}
&=& [a]_{\kappa} \Gamma_{m}[a],
\end{eqnarray}
where $\etr(\cdot) = \exp(\tr(\cdot))$, $|\cdot|$ denotes the determinant, and for $\mathbf{A}
\in \mathfrak{S}_{m}$
\begin{equation}\label{hwv}
    q_{\kappa}(\mathbf{A}) = |\mathbf{A}_{m}|^{k_{m}}\prod_{i = 1}^{m-1}|\mathbf{A}_{i}|^{k_{i}-k_{i+1}}
\end{equation}
with $\mathbf{A}_{p} = (a_{rs})$, $r,s = 1, 2, \dots, p$, $p = 1,2, \dots, m$ is termed the
\emph{highest weight vector}, see \citet{gr:87}. Also,
\begin{eqnarray*}
  \Gamma_{m}[a] &=& \displaystyle\int_{\mathbf{A} \in \mathfrak{P}_{m}}
  \etr\{-\mathbf{A}\} |\mathbf{A}|^{a-(m-1)/2 - 1}(d\mathbf{A}) \\ \label{cgamma}
    &=& \pi^{m(m-1)/4}\displaystyle\prod_{i=1}^{m} \Gamma[a-(i-1)/2],
\end{eqnarray*}
and $\re(a)> (m-1)/2$.

In other branches of mathematics the \textit{highest weight vector} $q_{\kappa}(\mathbf{A})$ is
also termed the \emph{generalised power} of $\mathbf{A}$ and is denoted as
$\Delta_{\kappa}(\mathbf{A})$, see \citet{fk:94} and \citet{hl:01}.

Additional properties of $q_{\kappa}(\mathbf{A})$, which are immediate consequences of the
definition of $q_{\kappa}(\mathbf{A})$ and the following property 1, are:
\begin{enumerate}
  \item Let $\mathbf{A} = \mathbf{L}^{*}\mathbf{DL}$ be the L'DL decomposition of $\mathbf{A} \in \mathfrak{P}_{m}^{\beta}$,
        where $\mathbf{L} \in \mathfrak{T}_{U}^{\beta}(m)$ with $l_{ii} = 1$, $i = 1, 2, \ldots ,m$ and
        $\mathbf{D} = \diag(\lambda_{1}, \dots, \lambda_{m})$, $\lambda_{i} \geq 0$, $i = 1, 2, \ldots
        ,m$. Then
        \begin{equation}\label{qk1}
          q_{\kappa}(\mathbf{A}) = \prod_{i=1}^{m} \lambda_{i}^{k_{i}}.
        \end{equation}
      \item
      \begin{equation}\label{qk2}
        q_{\kappa}(\mathbf{A}^{-1}) =  q_{-\kappa^{*}}^{*}(\mathbf{A}),
      \end{equation}
      where $\kappa^{*}=(k_{m}, k_{m-1}, \dots,k_{1})$, $-\kappa^{*}=(-k_{m}, -k_{m-1},
      \dots,-k_{1})$,
      \begin{equation}\label{hhwv}
         q_{\kappa}^{*}(\mathbf{A}) = |\mathbf{A}_{m}|^{k_{m}}\prod_{i = 1}^{m-1}|\mathbf{A}_{i}|^{k_{i}-k_{i+1}}
      \end{equation}
      and
      \begin{equation}\label{qqk1}
        q_{\kappa}^{*}(\mathbf{A}) = \prod_{i=1}^{m} \lambda_{i}^{k_{m-i+1}},
      \end{equation}
      see \citet[pp. 126-127 and Proposition VII.1.5]{fk:94}.

  Alternatively, let $\mathbf{A} = \mathbf{T}^{*}\mathbf{T}$ the Cholesky's decomposition of
  matrix $\mathbf{A} \in \mathfrak{P}_{m}^{\beta}$, with $\mathbf{T}=(t_{ij}) \in
  \mathfrak{T}_{U}^{\beta}(m)$, then $\lambda_{i} = t_{ii}^{2}$, $t_{ii} \geq 0$, $i = 1, 2,
  \ldots ,m$. See \citet[p. 931, first paragraph]{hl:01}, \citet[p. 390, lines -11 to
  -16]{hlz:05} and \citet[p.5, lines 1-6]{k:14}.
  \item if $\kappa = (p, \dots, p)$, then:
    \begin{equation}\label{qk3}
        q_{\kappa}(\mathbf{A}) = |\mathbf{A}|^{p},
    \end{equation}
    in particular if $p=0$, then $q_{\kappa}(\mathbf{A}) = 1$.
  \item if $\tau = (t_{1}, t_{2}, \dots, t_{m})$, $t_{1}\geq t_{2}\geq \cdots \geq t_{m} \geq
  0$, then:
    \begin{equation}\label{qk41}
        q_{\kappa+\tau}(\mathbf{A}) = q_{\kappa}(\mathbf{A})q_{\tau}(\mathbf{A}),
    \end{equation}
    in particular if $\tau = (p,p, \dots, p)$,  then:
    \begin{equation}\label{qk42}
        q_{\kappa+\tau}(\mathbf{A}) \equiv q_{\kappa+p}(\mathbf{A}) = |\mathbf{A}|^{p} q_{\kappa}(\mathbf{A}).
    \end{equation}
    \item Finally, for $\mathbf{B} \in \mathfrak{T}_{U}^{\beta}(m)$  in such a manner that
$\mathbf{C} =
    \mathbf{B}^{*}\mathbf{B} \in \mathfrak{S}_{m}^{\beta}$,
    \begin{equation}\label{qk5}
        q_{\kappa}(\mathbf{B}^{*}\mathbf{AB}) = q_{\kappa}(\mathbf{C})q_{\kappa}(\mathbf{A}),
    \end{equation}
see \citet[p. 776, eq. (2.1)]{hlz:08}.
\end{enumerate}
\begin{rem}
Let $\mathcal{P}(\mathfrak{S}_{m})$ denote the algebra of all polynomial functions on
$\mathfrak{S}_{m}$, and $\mathcal{P}_{k}(\mathfrak{S}_{m})$ the subspace of homogeneous
polynomials of degree $k$ and let $\mathcal{P}^{\kappa}(\mathfrak{S}_{m})$ be an irreducible
subspace of $\mathcal{P}(\mathfrak{S}_{m})$ such that:
$$
  \mathcal{P}_{k}(\mathfrak{S}_{m}) = \sum_{\kappa}\bigoplus
  \mathcal{P}^{\kappa}(\mathfrak{S}_{m}).
$$
Note that $q_{\kappa}$ is a homogeneous polynomial of degree $k$, moreover $q_{\kappa} \in
\mathcal{P}^{\kappa}(\mathfrak{S}_{m})$, for references see \citet{gr:87}.
\end{rem}
In (\ref{gammagen1}), $[a]_{\kappa}$ denotes the generalised Pochhammer symbol of weight
$\kappa$, defined as:
\begin{eqnarray*}
  [a]_{\kappa} &=& \prod_{i = 1}^{m}(a-(i-1)/2)_{k_{i}}\\
    &=& \frac{\pi^{m(m-1)/4} \displaystyle\prod_{i=1}^{m}
    \Gamma[a + k_{i} -(i-1)/2]}{\Gamma_{m}[a]} \\
    &=& \frac{\Gamma_{m}[a,\kappa]}{\Gamma_{m}[a]},
\end{eqnarray*}
where $\re(a) > (m-1)/2 - k_{m}$ and
$$
  (a)_{i} = a (a+1)\cdots(a+i-1),
$$
is the standard Pochhammer symbol.

An alternative definition of the generalised gamma function of weight $\kappa$ is proposed by
\citet{k:66}, which is defined as:%
\begin{eqnarray}\label{int2}
  \Gamma_{m}[a,-\kappa] &=& \displaystyle\int_{\mathbf{A} \in \mathfrak{P}_{m}}
    \etr\{-\mathbf{A}\} |\mathbf{A}|^{a-(m-1)/2 - 1} q_{\kappa}(\mathbf{A}^{-1})
    (d\mathbf{A}) \\
&=& \pi^{m(m-1)/4}\displaystyle\prod_{i=1}^{m} \Gamma[a - k_{i}
    -(m-i)/2] \nonumber\\ \label{gammagen2}
&=& \displaystyle\frac{(-1)^{k} \Gamma_{m}[a]}{[-a +(m-1)/2
    +1]_{\kappa}} ,
\end{eqnarray}
where $\re(a) > (m-1)/2 + k_{1}$.

Also recall that from \citet{mn:88};

\begin{enumerate}
  \item Let $\mathbf{A}$ be an $m \times n$ matrix and $\mathbf{B}$ a $p \times q$ matrix. The $mp \times
  nq$ matrix is defined as:
  $$
    \left [
        \begin{array}{ccc}
          a_{11}\mathbf{B}& \cdots & a_{1n}\mathbf{B} \\
          \vdots & \ddots & \vdots \\
          a_{m1}\mathbf{B} & \cdots & a_{mn}\mathbf{B}
        \end{array}
    \right ]
  $$
 and is termed the Kronecker product of $\mathbf{A}$ and $\mathbf{B}$ and written $\mathbf{A} \otimes
  \mathbf{B}$.
  \item Let $\mathbf{A}$ be an $m \times n$ matrix and $\mathbf{A}_{j}$ its $j$-th column, the
  $\Vec A$ is the $mn \times 1$ vector
  $$
    \Vec A =
    \left [
        \begin{array}{c}
          \mathbf{A}_{1} \\
          \mathbf{A}_{2} \\
          \vdots \\
          \mathbf{A}_{n}
        \end{array}
    \right ]
  $$
  \item $\tr\mathbf{AB} = \Vec' \mathbf{A}'\Vec\mathbf{B}$.
  \item $\tr\mathbf{ABCD} = \Vec'\mathbf{D}'(\mathbf{C}'\otimes\mathbf{A})\Vec\mathbf{B} =
  \Vec'\mathbf{D}(\mathbf{A}'\otimes\mathbf{C}')\Vec\mathbf{B}'$.
  \item For any matrix $\mathbf{X}=(x_{ij}) \in \Re^{m \times n}$, $d\mathbf{X}$ denotes the \emph{
  matrix of differentials}, $d\mathbf{X} = (dx_{ij})$.
  \item In particular\footnote{These results
  exist for more general conditions, see \citet[Theorem 1, pp. 149]{mn:88}}, if $F:\Re^{m \times m}
  \rightarrow \Re^{m \times m}$ of rank $m$, then:
  \begin{enumerate}
    \item $d|F(\mathbf{X})|^{p} = |F(\mathbf{X})|^{p}\tr (F(\mathbf{X}))^{-1} dF(\mathbf{X})$,

    \item $dF(\mathbf{X})^{-1} = -F(\mathbf{X})^{-1}dF(\mathbf{X})F(\mathbf{X})^{-1}$. Also
    \item $d\mathbf{AXB} = \mathbf{A}d\mathbf{XB}$, and
    \item $d\tr \mathbf{AXB}  = \tr \mathbf{A}d\mathbf{XB}$.
  \end{enumerate}
  \item If $f:\Re^{m \times n} \rightarrow \Re$, $\Vec' \mathbf{X} = (\Vec \mathbf{X})'$
  and $d\Vec' \mathbf{X} = (d\Vec \mathbf{X})'$ then:
   \begin{enumerate}
     \item if $d\Vec f(\mathbf{X}) = \Vec'\mathbf{B}d\Vec\mathbf{X}$, it´s obtained that:
     $$
       \frac{\partial\Vec f(\mathbf{X})}{\partial \Vec'\mathbf{X}} =  \Vec'\mathbf{B} \mbox{ and
       } \frac{\partial f(\mathbf{X})}{\partial \mathbf{X}} = \mathbf{B}.
     $$
     \item if $d^{\ 2}\Vec f(\mathbf{X}) = d\Vec'\mathbf{X}\ \mathbf{B}\ d\Vec\mathbf{X}$ then:
     $$
       \frac{\partial^{2}\Vec f(\mathbf{X})}{\partial \Vec\mathbf{X}\partial \Vec'\mathbf{X}} =
       \frac{1}{2}(\mathbf{B}+\mathbf{B}').
     $$
   \end{enumerate}
   \item Finally, note that for a matrix $\mathbf{A}$ of order $m \times m$, their submatrices
     $$
       \mathbf{A}_{1} = a_{11}, \
       \mathbf{A}_{2} =
       \left [
        \begin{array}{cc}
          a_{11} & a_{12} \\
          a_{21} & a_{22}
        \end{array}
       \right ], \ \cdots
       \mathbf{A}_{m} =
       \left [
        \begin{array}{ccc}
          a_{11} & \cdots & a_{1m} \\
          \vdots & \ddots & \vdots \\
          a_{m1} & \cdots & a_{mm}
        \end{array}
       \right ],
     $$
     can be written as:
     \begin{equation}\label{XI}
        \mathbf{A}_{i} = \mathbf{E}'_{i}\mathbf{A}\mathbf{E}_{i},
     \end{equation}
     where the columns of $\mathbf{E}_{i}: m \times i$ are the first $i$ columns of the identity matrix
     $\mathbf{I}_{m}$ and $\mathbf{E}_{i} = [\mathbf{I}_{i}\ \build{\mathbf{0}}{}{i \times m-i}]'$ is such that
     \begin{enumerate}
       \item $\mathbf{E}'_{i}\mathbf{E}_{i} = \mathbf{I}_{i}$
       \item $\mathbf{E}_{i}^{+} = \mathbf{E}_{i}^{'}$, hence, $\mathbf{E}_{i}^{+'} =
       \mathbf{E}_{i}$, where $\mathbf{A}^{+}$ denotes the Moore-Penrose inverse of
       $\mathbf{A}$,
       \item $\mathbf{E}_{m} = \mathbf{I}_{m}$,
       \item and
       $$
         \mathbf{E}_{i}\mathbf{E}'_{i} =
         \left[
            \begin{array}{c}
              \mathbf{I}_{i} \\
              \build{\mathbf{0}}{}{m-i \times i}
            \end{array}
         \right ]
         [\mathbf{I}_{i}\ \build{\mathbf{0}}{}{i \times m-i}]=
         \left [
            \begin{array}{cc}
              \mathbf{I}_{i} & \build{\mathbf{0}}{}{i \times m-i} \\
              \build{\mathbf{0}}{}{m-i \times i} & \build{\mathbf{0}}{}{m-i \times m-i}
            \end{array}
         \right ].
       $$
     \end{enumerate}
\end{enumerate}
Closely related to the differentiation are the commutation matrix $\mathbf{K}_{mn}$ and the
matrix $\mathbf{N}_{m} = \frac{1}{2}(\mathbf{I}_{m^{2}}+\mathbf{K}_{m})$.

The commutation matrix $\mathbf{K}_{mn}$ is such that for a $m \times n$ matrix $\mathbf{A}$,
$$
  \mathbf{K}_{mn} \Vec \mathbf{A} = \Vec \mathbf{A}'.
$$
with, $\mathbf{K}'_{mn} = \mathbf{K}_{mn}^{-1} = \mathbf{K}_{nm}$. If $m = n$, is written as
$\mathbf{K}_{m}$ instead of $\mathbf{K}_{mn}$. The main property of this matrix is: Let
$\mathbf{A}$ be an $m \times n$ matrix, $\mathbf{B}$ a $p \times q$ matrix. Then:
$$
  \mathbf{K}_{pm}(\mathbf{A} \otimes \mathbf{B}) = (\mathbf{B} \otimes
  \mathbf{A})\mathbf{K}_{qn}.
$$
By other hand, the matrix $\mathbf{N}_{m}$ is such that:
$$
  \mathbf{N}_{m} = \mathbf{N}'_{m} = \mathbf{N}_{m}^{2} = \mathbf{K}_{m}\mathbf{N}_{m}
  = \mathbf{N}_{m}\mathbf{K}_{m}.
$$
Note that for $\mathbf{A}$ an $m \times m$ symmetric matrix,
\begin{equation}\label{eqNm}
    \Vec \mathbf{A} = \mathbf{N}_{m}\Vec \mathbf{A}.
\end{equation}
Finally consider the following matrix factorisation.
\begin{prop}
If $\mathbf{A}$ is a non-negative definite $m \times m$ matrix then there is a upper triangular
$m \times m$ matrix, written as $\mathbf{A}^{1/2}$, such that $\mathbf{A} =
\left(\mathbf{A}^{1/2}\right)'\mathbf{A}^{1/2}$, $a_{ii} \geq 0$, $i = 1, \dots, m$.
\end{prop}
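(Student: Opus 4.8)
The plan is to construct $\mathbf{A}^{1/2}$ explicitly from the spectral decomposition of $\mathbf{A}$. Since $\mathbf{A}$ is non-negative definite it lies in $\mathfrak{S}_{m}$, so by the spectral theorem there is an orthogonal matrix $\mathbf{Q} \in \mathfrak{A}^{m \times m}$ (that is, $\mathbf{Q}'\mathbf{Q} = \mathbf{Q}\mathbf{Q}' = \mathbf{I}_{m}$) and a diagonal matrix $\mathbf{D} = \diag(\lambda_{1},\dots,\lambda_{m})$ with $\mathbf{A} = \mathbf{Q}\mathbf{D}\mathbf{Q}'$. Non-negative definiteness forces $\lambda_{i} \geq 0$ for every $i$, because $\lambda_{i} = \mathbf{q}_{i}'\mathbf{A}\mathbf{q}_{i} \geq 0$, where $\mathbf{q}_{i}$ denotes the $i$-th column of $\mathbf{Q}$.

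Next I would set $\mathbf{D}^{1/2} = \diag(\sqrt{\lambda_{1}},\dots,\sqrt{\lambda_{m}})$, which is well defined precisely because each $\lambda_{i}\geq 0$, and define $\mathbf{A}^{1/2} := \mathbf{Q}\mathbf{D}^{1/2}\mathbf{Q}'$. Two things must then be checked. First, $\mathbf{A}^{1/2} \in \mathfrak{S}_{m}$ is non-negative definite: it is symmetric since $(\mathbf{A}^{1/2})' = \mathbf{Q}(\mathbf{D}^{1/2})'\mathbf{Q}' = \mathbf{A}^{1/2}$, and for any $\mathbf{x}$, writing $\mathbf{y} = \mathbf{Q}'\mathbf{x}$ gives $\mathbf{x}'\mathbf{A}^{1/2}\mathbf{x} = \mathbf{y}'\mathbf{D}^{1/2}\mathbf{y} = \sum_{i}\sqrt{\lambda_{i}}\,y_{i}^{2} \geq 0$. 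Second, using $\mathbf{Q}'\mathbf{Q} = \mathbf{I}_{m}$ and $\mathbf{D}^{1/2}\mathbf{D}^{1/2} = \mathbf{D}$,
\[
  \mathbf{A}^{1/2}\mathbf{A}^{1/2} = \mathbf{Q}\mathbf{D}^{1/2}\mathbf{Q}'\mathbf{Q}\mathbf{D}^{1/2}\mathbf{Q}' = \mathbf{Q}\mathbf{D}^{1/2}\mathbf{D}^{1/2}\mathbf{Q}' = \mathbf{Q}\mathbf{D}\mathbf{Q}' = \mathbf{A},
\]
which is the required factorisation.

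Since every step reduces to a direct computation once the spectral decomposition is available, there is no genuine obstacle here; the only point needing attention is to observe that the hypothesis of non-negative definiteness is exactly what guarantees $\lambda_{i}\geq 0$, and hence that $\mathbf{D}^{1/2}$ — and thus $\mathbf{A}^{1/2}$ — is real and non-negative definite rather than merely Hermitian with possibly complex entries. The proposition asserts only existence, so no uniqueness argument is needed; were one wanted, it would follow from the fact that any non-negative definite root must commute with $\mathbf{A}$ and therefore be simultaneously diagonalisable with it.
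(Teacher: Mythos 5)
Your argument is correct: the spectral decomposition $\mathbf{A} = \mathbf{Q}\mathbf{D}\mathbf{Q}'$ with $\lambda_{i}\geq 0$, the definition $\mathbf{A}^{1/2} = \mathbf{Q}\mathbf{D}^{1/2}\mathbf{Q}'$, and the verification that this square root is symmetric, non-negative definite and squares to $\mathbf{A}$ are all sound. Note that the paper itself offers no proof of this proposition --- it is stated as a standard matrix factorisation (of the kind found in \citet{m:82} or \citet{mn:88}) and used only to make sense of $\mathbf{\Sigma}^{1/2}$ in the characteristic functions --- so your spectral-theorem construction is precisely the canonical argument the paper implicitly relies on, and nothing further is needed.
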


\subsection{Riesz distributions}\label{sec22}

This section summarise the densities and their corresponding characteristic functions for the
two versions of the Riesz distribution. From \citet{dg:12},

\begin{defn}\label{defnRd}
Let $\mathbf{\Sigma} \in \mathfrak{P}_{m}$ and $\kappa = (k_{1}, k_{2}, \dots, k_{m})$,
$k_{1}\geq k_{2}\geq \cdots \geq k_{m} \geq 0$.
\begin{enumerate}
  \item Then it is said that $\mathbf{X}$ has a Riesz distribution of type I if its density function is:
  \begin{equation}\label{dfR1}
    \frac{1}{\Gamma_{m}[a,\kappa] |\mathbf{\Sigma}|^{a}q_{\kappa}(\mathbf{\Sigma})}
    \etr\{-\mathbf{\Sigma}^{-1}\mathbf{X}\}|\mathbf{X}|^{a-(m-1)/2 - 1}
    q_{\kappa}(\mathbf{X})(d\mathbf{X})
  \end{equation}
  for $\mathbf{X} \in \mathfrak{P}_{m}$  and $a \in \Re$, $a \geq (m-1)/2 - k_{m}$;
  denoting this fact as $\mathbf{X} \sim \mathfrak{R}^{I}_{m}(a,\kappa,
  \mathbf{\Sigma})$.
  \item Then it is said that $\mathbf{X}$ has a Riesz distribution of type II if its density function is:
  \begin{equation}\label{dfR2}
     \frac{1}{\Gamma_{m}[a,-\kappa]|\mathbf{\Sigma}|^{a}q_{\kappa}(\mathbf{\Sigma}^{-1})}
     \etr\{-\mathbf{\Sigma}^{-1}\mathbf{X}\}|\mathbf{X}|^{a-(m-1)/2 - 1}
     q_{\kappa}(\mathbf{X}^{-1}) (d\mathbf{X})
  \end{equation}
  for $\mathbf{X} \in \mathfrak{P}_{m}$ and $a \in \Re$, $a > (m-1)/2 + k_{1}$;
  denoting this fact as $\mathbf{X} \sim \mathfrak{R}^{II}_{m}(a,\kappa,
  \mathbf{\Sigma})$.
\end{enumerate}
\end{defn}

From \citet{dg:12} and using (\ref{qk41}) it is  obtained that:

\begin{lem}\label{lem0}
Let $\mathbf{\Sigma} \in \mathfrak{P}_{m}$ and  $\kappa = (k_{1}, k_{2}, \dots, k_{m})$,
$k_{1}\geq k_{2}\geq \cdots \geq k_{m} \geq 0$.
\begin{enumerate}
  \item Then if $\mathbf{X} \sim \mathfrak{R}^{I}_{m}(a,\kappa, \mathbf{\Sigma})$ its characteristic function is:
  \begin{equation}\label{chfR1}
    \phi_{_{\mathbf{X}}}(\mathbf{T}) = q_{\kappa + a}\left(\left(\mathbf{I}_{m}-i\mathbf{\Sigma}^{1/2}
    \mathbf{T}\left(\mathbf{\Sigma}^{1/2}\right)'\right)^{-1}\right)
  \end{equation}
  for $a \in \Re$, $a \geq (m-1)/2 - k_{m}$.
  \item Then if $\mathbf{X} \sim \mathfrak{R}^{II}_{m}(a,\kappa, \mathbf{\Sigma})$ its characteristic function is:
  \begin{equation}\label{chfR2}
     \phi_{_{\mathbf{X}}}(\mathbf{T}) = q_{\kappa - a}\left(\mathbf{I}_{m}-i\mathbf{\Sigma}^{1/2}
     \mathbf{T}\left(\mathbf{\Sigma}^{1/2}\right)'\right)
  \end{equation}
  for $a \in \Re$, $a > (m-1)/2 + k_{1}$.
\end{enumerate}
\end{lem}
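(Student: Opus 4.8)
The plan is to obtain both characteristic functions directly from the definition $\phi_{_{\mathbf{X}}}(\mathbf{T})=\E[\etr(i\mathbf{T}\mathbf{X})]$: integrate $\etr(i\mathbf{T}\mathbf{X})$ against each density of Definition \ref{defnRd}, recognise the resulting matrix integral as one of the generalised gamma integrals (\ref{int1}), (\ref{int2}), and then compress the outcome with the multiplicativity rules (\ref{qk2}), (\ref{qk5}), (\ref{qk6}) together with (\ref{qk41})--(\ref{qk42}) --- the latter being exactly what merges a determinant power times a generalised power into one shifted generalised power, $|\mathbf{A}|^{a}q_{\kappa}(\mathbf{A})=q_{\kappa+a}(\mathbf{A})$. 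This is, in substance, the computation underlying the formulas quoted from \citet{dg:12}.

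For part 1, multiplying (\ref{dfR1}) by $\etr(i\mathbf{T}\mathbf{X})$ merges the exponentials into $\etr(-\mathbf{\Psi}\mathbf{X})$ with $\mathbf{\Psi}:=\mathbf{\Sigma}^{-1}-i\mathbf{T}$. The substitution $\mathbf{Y}=\mathbf{\Psi}^{1/2}\mathbf{X}\mathbf{\Psi}^{1/2}$, with Jacobian $(d\mathbf{X})=|\mathbf{\Psi}|^{-(m+1)/2}(d\mathbf{Y})$, then sends the exponent to $\etr(-\mathbf{Y})$, extracts $|\mathbf{\Psi}|^{-(a-(m-1)/2-1)}$ from $|\mathbf{X}|^{a-(m-1)/2-1}$, and --- applying (\ref{qk6}) to $\mathbf{X}=\mathbf{\Psi}^{-1/2}\mathbf{Y}\mathbf{\Psi}^{-1/2}$ --- replaces $q_{\kappa}(\mathbf{X})$ by $q_{\kappa}^{-1}(\mathbf{\Psi})\,q_{\kappa}(\mathbf{Y})$. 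The remaining $\mathbf{Y}$-integral is precisely $\Gamma_{m}[a,\kappa]$ by (\ref{int1}), so it cancels the normalising constant; the powers of $|\mathbf{\Psi}|$ sum to $-a$, leaving $|\mathbf{\Psi}|^{-a}q_{\kappa}^{-1}(\mathbf{\Psi})/(|\mathbf{\Sigma}|^{a}q_{\kappa}(\mathbf{\Sigma}))=q_{\kappa+a}(\mathbf{\Psi}^{-1})/q_{\kappa+a}(\mathbf{\Sigma})$ by (\ref{qk2}) and (\ref{qk42}). Finally $\mathbf{\Psi}^{-1}=\mathbf{\Sigma}^{1/2}(\mathbf{I}_{m}-i\mathbf{\Sigma}^{1/2}\mathbf{T}\mathbf{\Sigma}^{1/2})^{-1}\mathbf{\Sigma}^{1/2}$ and (\ref{qk5}) factor the numerator so that $q_{\kappa+a}(\mathbf{\Sigma})$ cancels, producing (\ref{chfR1}). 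Part 2 is the same calculation starting from (\ref{dfR2}): now $q_{\kappa}(\mathbf{X}^{-1})$ transforms by (\ref{qk5}) since $\mathbf{X}^{-1}=\mathbf{\Psi}^{1/2}\mathbf{Y}^{-1}\mathbf{\Psi}^{1/2}$, the $\mathbf{Y}$-integral equals $\Gamma_{m}[a,-\kappa]$ by (\ref{int2}), and one is left with $q_{\kappa-a}(\mathbf{\Sigma})\,q_{\kappa-a}(\mathbf{\Psi})$, again by (\ref{qk42}); writing $\mathbf{\Psi}=\mathbf{\Sigma}^{-1/2}(\mathbf{I}_{m}-i\mathbf{\Sigma}^{1/2}\mathbf{T}\mathbf{\Sigma}^{1/2})\mathbf{\Sigma}^{-1/2}$ and applying (\ref{qk6}) yields (\ref{chfR2}).

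The delicate point --- and, I expect, the real obstacle --- is that $\mathbf{\Psi}=\mathbf{\Sigma}^{-1}-i\mathbf{T}$ is complex and not Hermitian, whereas the square root $\mathbf{\Psi}^{1/2}$, the Jacobian formula, and properties (\ref{qk2})--(\ref{qk6}) were recorded for real positive definite matrices. The remedy, exactly as for the Wishart characteristic function in \citet{m:82}, is to first establish the identities with $\mathbf{\Psi}$ replaced by a general $m\times m$ matrix $\mathbf{W}$ whose Hermitian part $(\mathbf{W}+\mathbf{W}^{*})/2$ is positive definite: for real positive definite $\mathbf{W}$ every step above is legitimate, and both sides of the identity are holomorphic in the entries of $\mathbf{W}$ throughout this (connected) domain. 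Since the Hermitian part of $\mathbf{\Psi}$ equals $\mathbf{\Sigma}^{-1}$, so that $\re\tr(\mathbf{\Psi}\mathbf{X})=\tr(\mathbf{\Sigma}^{-1}\mathbf{X})$ and the defining integral converges absolutely at $\mathbf{W}=\mathbf{\Psi}$, uniqueness of analytic continuation then delivers (\ref{chfR1}) and (\ref{chfR2}).
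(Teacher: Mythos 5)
Your derivation is correct, but it is a genuinely different (and more substantial) route than the paper's. The paper does not actually prove Lemma \ref{lem0}: it imports the characteristic functions from \citet{dg:12} and only uses property (\ref{qk41})--(\ref{qk42}) to compress the determinant factor and the generalised power into the single shifted weight $q_{\kappa+a}$, resp.\ $q_{\kappa-a}$. You instead reconstruct the result from the definition $\E[\etr(i\mathbf{T}\mathbf{X})]$: the substitution $\mathbf{Y}=\mathbf{\Psi}^{1/2}\mathbf{X}\mathbf{\Psi}^{1/2}$ with Jacobian $|\mathbf{\Psi}|^{-(m+1)/2}$, the bookkeeping of determinant exponents summing to $-a$, the use of (\ref{qk5})--(\ref{qk6}) to transport $q_{\kappa}$ through the change of variables, and the identification of the remaining integrals with $\Gamma_{m}[a,\kappa]$ and $\Gamma_{m}[a,-\kappa]$ are all accurate, and the final cancellations via $\mathbf{\Psi}^{-1}=\mathbf{\Sigma}^{1/2}(\mathbf{I}_{m}-i\mathbf{\Sigma}^{1/2}\mathbf{T}\mathbf{\Sigma}^{1/2})^{-1}\mathbf{\Sigma}^{1/2}$ and $\mathbf{\Psi}=\mathbf{\Sigma}^{-1/2}(\mathbf{I}_{m}-i\mathbf{\Sigma}^{1/2}\mathbf{T}\mathbf{\Sigma}^{1/2})\mathbf{\Sigma}^{-1/2}$ reproduce (\ref{chfR1}) and (\ref{chfR2}) exactly. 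You also correctly identify and repair the one point a formal computation glosses over: $\mathbf{\Psi}=\mathbf{\Sigma}^{-1}-i\mathbf{T}$ is complex, so the substitution and the $q_{\kappa}$ identities must first be proved for real positive definite arguments and then extended by holomorphy on the domain of matrices with positive definite Hermitian part (the Wishart-style analytic continuation of \citet{m:82}), noting that the Hermitian part of $\mathbf{\Psi}$ is $\mathbf{\Sigma}^{-1}$ so the integral converges there. What your route buys is a self-contained, rigorous proof inside this paper rather than a citation; what the paper's route buys is brevity, since the hard work is delegated to \citet{dg:12}. One cosmetic remark: for type I the convergence of $\Gamma_{m}[a,\kappa]$ really requires the strict inequality $\re(a)>(m-1)/2-k_{m}$, but the weak inequality is the paper's own statement, not a defect of your argument.
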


\subsection{Differentiation}\label{sec23}

Finally, consider the following result about differentiation.

\begin{lem}\label{lem1}
Let $\mathbf{\Sigma} \in \mathfrak{P}_{m}$ and  $\kappa = (k_{1}, k_{2}, \dots, k_{m})$,
$k_{1}\geq k_{2}\geq \cdots \geq k_{m} \geq 0$. Then if $\mathbf{X} \sim
\mathfrak{R}^{I}_{m}(a,\kappa, \mathbf{\Sigma})$ it is obtained:
\begin{enumerate}
  \item $\displaystyle \frac{\partial \Vec \phi_{_{\mathbf{X}}}(\mathbf{T})}{\partial \Vec' \mathbf{T}} $
  $$
    = i\left [\sum_{i = 1}^{m}(t_{i}-t_{i+1})\Vec'\mathbf{A}_{i}
    \right]\mathbf{N}_{m} q_{\tau}\left(\left (\mathbf{I}_{m}-
  i \mathbf{\Sigma}^{1/2}\mathbf{T}\left(\mathbf{\Sigma}^{1/2}\right)'\right)^{-1}\right),
  $$
  \item and $\displaystyle \frac{\partial^{2} \Vec \phi_{_{\mathbf{X}}}(\mathbf{T})}{\partial \Vec
  \mathbf{T}\partial \Vec' \mathbf{T}}$
  \begin{eqnarray*}
     &=&
    i^{2}\mathbf{N}_{m}\left \{\sum_{i = 1}^{m}(t_{i}-t_{i+1})^{2} \Vec\mathbf{A}_{i} \Vec'\mathbf{A}_{i}\right.\\
      &+& \frac{1}{2}\left [\build{\sum\sum}{m}{i \neq j}(t_{i}-t_{i+1})(t_{j}-t_{j+1})
      \Vec \mathbf{A}_{i} \Vec'\mathbf{A}_{j}\right.\\
      &+& \build{\sum\sum}{m}{i \neq j}(t_{i}-t_{i+1})(t_{j}-t_{j+1})
      \Vec \mathbf{A}_{j}\Vec'\mathbf{A}_{i}\Bigg ]\\
      &+& \sum_{i=1}^{m}(t_{i}-t_{i+1})\left(\mathbf{A}_{i}\otimes \mathbf{A}_{i}\right)\Bigg\}\mathbf{N}_{m}
      q_{\tau}\left(\left (\mathbf{I}_{m}- i \mathbf{\Sigma}^{1/2}\mathbf{T}\left(\mathbf{\Sigma}^{1/2}\right)'\right)^{-1}\right),\\
  \end{eqnarray*}
\end{enumerate}
where
$$
  \mathbf{A}_{\alpha} = \left(\mathbf{\Sigma}^{1/2}\right)'\mathbf{E}_{\alpha}\left(\mathbf{E}'_{\alpha}\left (\mathbf{I}_{m}-
  i \mathbf{\Sigma}^{1/2}\mathbf{T}\left(\mathbf{\Sigma}^{1/2}\right)'\right)\mathbf{E}_{\alpha}\right)^{-1}
\mathbf{E}'_{\alpha}\mathbf{\Sigma}^{1/2}, \quad \alpha = i, j,
$$
$\tau = \kappa+a = (t_{1}, \dots, t_{m})$ and $a \geq (m-1)/2 - k_{m}$.
\end{lem}
\begin{proof}
By 6(a), 6(b) and Lemma \ref{lem0}, defining $\kappa+a = \tau = (t_{1}, \dots, t_{m})$, with
$t_{m+1}= 0$, $\mathbf{E}_{m} = \mathbf{I}_{m}$ and
$$
  \mathbf{A}_{\alpha} = \left(\mathbf{\Sigma}^{1/2}\right)'\mathbf{E}_{\alpha}\left(\mathbf{E}'_{\alpha}\left (\mathbf{I}_{m}-
  i \mathbf{\Sigma}^{1/2}\mathbf{T}\left(\mathbf{\Sigma}^{1/2}\right)'\right)\mathbf{E}_{\alpha}\right)^{-1}
\mathbf{E}'_{\alpha}\mathbf{\Sigma}^{1/2}, \quad \alpha = i, j,
$$
it is obtained that
$$
  d\phi_{_{\mathbf{X}}}(\mathbf{T}) = i\left [\sum_{i = 1}^{m}(t_{i}-t_{i+1})\tr\mathbf{A}_{i}
    d\mathbf{T}\right] q_{\tau}\left(\left (\mathbf{I}_{m}-
  i \mathbf{\Sigma}^{1/2}\mathbf{T}\left(\mathbf{\Sigma}^{1/2}\right)'\right)^{-1}\right).
$$
Hence, vectorising,
$$
  d\Vec\phi_{_{\mathbf{X}}}(\mathbf{T}) = i\left [\sum_{i = 1}^{m}(t_{i}-t_{i+1})\Vec'\mathbf{A}_{i}
    \right] q_{\tau}\left(\left (\mathbf{I}_{m}-
  i \mathbf{\Sigma}^{1/2}\mathbf{T}\left(\mathbf{\Sigma}^{1/2}\right)'\right)^{-1}\right)d\Vec \mathbf{T},
$$
from where applying 7(a) and (\ref{eqNm}), the desired result is obtained.

Analogously, differentiating again,
\begin{eqnarray*}
    d^{2}\phi_{_{\mathbf{X}}}(\mathbf{T}) &=&
    i^{2}\left \{\sum_{i = 1}^{m}(t_{i}-t_{i+1})^{2} \tr\mathbf{A}_{i}d\mathbf{T} \tr\mathbf{A}_{i}d\mathbf{T}\right.\\
      &+& \build{\sum\sum}{m}{i \neq j}(t_{i}-t_{i+1})(t_{j}-t_{j+1})
      \tr \mathbf{A}_{i}d\mathbf{T} \tr\mathbf{A}_{j}d\mathbf{T}\\
      &+& \sum_{i=1}^{m}(t_{i}-t_{i+1})\tr\mathbf{A}_{i}d\mathbf{T} \mathbf{A}_{i}d\mathbf{T}\Bigg\}
      q_{\tau}\left(\left (\mathbf{I}_{m}- i \mathbf{\Sigma}^{1/2}\mathbf{T}\left(\mathbf{\Sigma}^{1/2}\right)'\right)^{-1}\right).\\
  \end{eqnarray*}
And vectorising using 4, it is got;
\begin{eqnarray*}
    d^{2}\Vec\phi_{_{\mathbf{X}}}(\mathbf{T}) &=&
    i^{2}\left \{\sum_{i = 1}^{m}(t_{i}-t_{i+1})^{2} d\Vec' \mathbf{T}\Vec\mathbf{A}_{i}\Vec'\mathbf{A}_{i}d\Vec \mathbf{T}\right.\\
\end{eqnarray*}
\begin{eqnarray*}
      &+& \build{\sum\sum}{m}{i \neq j}(t_{i}-t_{i+1})(t_{j}-t_{j+1})
      d\Vec' \mathbf{T} \Vec\mathbf{A}_{i}\Vec'\mathbf{A}_{j}d\Vec \mathbf{T}\\
      &+& \sum_{i=1}^{m}(t_{i}-t_{i+1})d\Vec' \mathbf{T}\left(\mathbf{A}_{i}\otimes \mathbf{A}_{i}\right)d\Vec \mathbf{T}\Bigg\}
      q_{\tau}\left(\left (\mathbf{I}_{m}- i \mathbf{\Sigma}^{1/2}\mathbf{T}\left(\mathbf{\Sigma}^{1/2}\right)'\right)^{-1}\right).\\
\end{eqnarray*}
From where, by applying 7(b) and (\ref{eqNm}), the outcome sought is obtained. \qed
\end{proof}

Analogously, one has:

\begin{lem}\label{lem2}
Let $\mathbf{\Sigma} \in \mathfrak{P}_{m}$ and  $\kappa = (k_{1}, k_{2}, \dots, k_{m})$,
$k_{1}\geq k_{2}\geq \cdots \geq k_{m} \geq 0$. Then if $\mathbf{X} \sim
\mathfrak{R}^{II}_{m}(a,\kappa, \mathbf{\Sigma})$ it is obtained:
\begin{enumerate}
  \item $\displaystyle \frac{\partial \Vec \phi_{_{\mathbf{X}}}(\mathbf{T})}{\partial \Vec' \mathbf{T}}$
  $$
    = -i\left [\sum_{i = 1}^{m}(t_{i}-t_{i+1})\Vec'\mathbf{A}_{i}
    \right]\mathbf{N}_{m} q_{\tau}\left(\mathbf{I}_{m}-
  i \mathbf{\Sigma}^{1/2}\mathbf{T}\left(\mathbf{\Sigma}^{1/2}\right)'\right),
  $$
  \item and $\displaystyle \frac{\partial^{2} \Vec \phi_{_{\mathbf{X}}}(\mathbf{T})}{\partial \Vec
  \mathbf{T}\partial \Vec' \mathbf{T}}$
  \begin{eqnarray*}
     &=&
    i^{2}\mathbf{N}_{m}\left \{\sum_{i = 1}^{m}(t_{i}-t_{i+1})^{2} \Vec\mathbf{A}_{i} \Vec'\mathbf{A}_{i}\right.\\
      &+& \frac{1}{2}\left [\build{\sum\sum}{m}{i \neq j}(t_{i}-t_{i+1})(t_{j}-t_{j+1})
      \Vec \mathbf{A}_{i} \Vec'\mathbf{A}_{j}\right.\\
      &+& \build{\sum\sum}{m}{i \neq j}(t_{i}-t_{i+1})(t_{j}-t_{j+1})
      \Vec \mathbf{A}_{j}\Vec'\mathbf{A}_{i}\Bigg ]\\
      &-& \sum_{i=1}^{m}(t_{i}-t_{i+1})\left(\mathbf{A}_{i}\otimes \mathbf{A}_{i}\right)\Bigg\}\mathbf{N}_{m}
      q_{\tau}\left(\mathbf{I}_{m}- i \mathbf{\Sigma}^{1/2}\mathbf{T}\left(\mathbf{\Sigma}^{1/2}\right)'\right),\\
  \end{eqnarray*}
\end{enumerate}
where:
$$
  \mathbf{A}_{\alpha} = \left(\mathbf{\Sigma}^{1/2}\right)'\mathbf{E}_{\alpha}\left(\mathbf{E}'_{\alpha}\left (\mathbf{I}_{m}-
  i \mathbf{\Sigma}^{1/2}\mathbf{T}\left(\mathbf{\Sigma}^{1/2}\right)'\right)\mathbf{E}_{\alpha}\right)^{-1}
\mathbf{E}'_{\alpha}\mathbf{\Sigma}^{1/2}, \quad \alpha = i, j,
$$
$\tau = \kappa-a = (t_{1}, \dots, t_{m})$ and $a > (m-1)/2 + k_{1}$.
\end{lem}
\begin{proof}This is a verbatim copy of the proof of Lemma \ref{lem1}. \qed
\end{proof}

\section{Moments of Riesz distributions}\label{sec3}

This section proposed the main result.

\begin{thm}\label{Moments}
Let $\mathbf{\Sigma} \in \mathfrak{P}_{m}$ and  $\kappa = (k_{1}, k_{2}, \dots, k_{m})$,
$k_{1}\geq k_{2}\geq \cdots \geq k_{m} \geq 0$.
\begin{enumerate}
  \item Then if $\mathbf{X}$ has a Riesz distribution of type I,
    \begin{enumerate}
      \item $\E(\mathbf{X}) = (k_{m} + a)\mathbf{\Sigma} + \displaystyle\sum_{i = 1}^{m-1}(k_{i} - k_{i-1})
      \left(\mathbf{\Sigma}^{1/2}\right)'\mathbf{E}_{i}\mathbf{E}'_{i}\mathbf{\Sigma}^{1/2}$.
      \item and $\Cov(\Vec \mathbf{X})$ is
      $$
        (k_{m} + a)\mathbf{N}_{m}(\mathbf{\Sigma} \otimes \mathbf{\Sigma})
         + \sum_{i = 1}^{m-1}(k_{i} - k_{i-1})\mathbf{N}_{m}\left(\left(\mathbf{\Sigma}^{1/2}\right)'\mathbf{E}_{i}
        \mathbf{E}'_{i}\mathbf{\Sigma}^{1/2}\otimes \left(\mathbf{\Sigma}^{1/2}\right)'\mathbf{E}_{i}\mathbf{E}'_{i}
        \mathbf{\Sigma}^{1/2}\right).
      $$
    \end{enumerate}
  for $a \geq (m-1)/2 - k_{m}$.
  \item Then if $\mathbf{X}$ has a Riesz distribution of type II,
  \begin{enumerate}
      \item $\E(\mathbf{X}) = -(k_{m} - a)\mathbf{\Sigma} - \displaystyle\sum_{i = 1}^{m-1}(k_{i} - k_{i-1})
      \left(\mathbf{\Sigma}^{1/2}\right)'\mathbf{E}_{i}\mathbf{E}'_{i}\mathbf{\Sigma}^{1/2}$.
      \item and $\Cov(\Vec \mathbf{X})$ is
      \begin{footnotesize}
      $$
        -(k_{m} - a)\mathbf{N}_{m}(\mathbf{\Sigma} \otimes \mathbf{\Sigma})
         - \sum_{i = 1}^{m-1}(k_{i} - k_{i-1})\mathbf{N}_{m}\left(\left(\mathbf{\Sigma}^{1/2}\right)'\mathbf{E}_{i}
        \mathbf{E}'_{i}\mathbf{\Sigma}^{1/2}\otimes \left(\mathbf{\Sigma}^{1/2}\right)'\mathbf{E}_{i}\mathbf{E}'_{i}
        \mathbf{\Sigma}^{1/2}\right).
      $$
      \end{footnotesize}
    \end{enumerate}
  for $a > (m-1)/2 + k_{1}$.
\end{enumerate}
\end{thm}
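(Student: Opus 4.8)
The plan is to extract the moments from the characteristic functions by evaluating the first and second matrix derivatives obtained in Lemmas~\ref{lem1} and~\ref{lem2} at $\mathbf{T}=\mathbf{0}$, using the standard relations $\E(\mathbf{X}) = (1/i)\,\partial\Vec\phi_{_{\mathbf{X}}}(\mathbf{T})/\partial\Vec'\mathbf{T}\big|_{\mathbf{T}=\mathbf{0}}$ (reshaped back into an $m\times m$ matrix) and $\E(\Vec\mathbf{X}\,\Vec'\mathbf{X}) = (1/i^{2})\,\partial^{2}\Vec\phi_{_{\mathbf{X}}}(\mathbf{T})/\partial\Vec\mathbf{T}\,\partial\Vec'\mathbf{T}\big|_{\mathbf{T}=\mathbf{0}}$, from which $\Cov(\Vec\mathbf{X}) = \E(\Vec\mathbf{X}\,\Vec'\mathbf{X}) - \Vec\E(\mathbf{X})\,\Vec'\E(\mathbf{X})$.

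First I would specialise the matrix $\mathbf{A}_{\alpha}$ of Lemmas~\ref{lem1}--\ref{lem2} at $\mathbf{T}=\mathbf{0}$: there $\mathbf{I}_m - i\mathbf{\Sigma}^{1/2}\mathbf{T}\mathbf{\Sigma}^{1/2} = \mathbf{I}_m$, so $\mathbf{A}_{\alpha}\big|_{\mathbf{T}=\mathbf{0}} = \mathbf{\Sigma}^{1/2}\mathbf{E}_{\alpha}(\mathbf{E}'_{\alpha}\mathbf{E}_{\alpha})^{-1}\mathbf{E}'_{\alpha}\mathbf{\Sigma}^{1/2} = \mathbf{\Sigma}^{1/2}\mathbf{E}_{\alpha}\mathbf{E}'_{\alpha}\mathbf{\Sigma}^{1/2}$ by property 8(a), and in particular $\mathbf{A}_{m}\big|_{\mathbf{T}=\mathbf{0}} = \mathbf{\Sigma}^{1/2}\mathbf{I}_m\mathbf{\Sigma}^{1/2} = \mathbf{\Sigma}$. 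Also $q_{\tau}((\mathbf{I}_m - i\mathbf{\Sigma}^{1/2}\mathbf{T}\mathbf{\Sigma}^{1/2})^{-1})\big|_{\mathbf{T}=\mathbf{0}} = q_{\tau}(\mathbf{I}_m) = 1$ by (\ref{qk3}), and likewise $q_{\tau}(\mathbf{I}_m)=1$ for type II. For the first moment of type I this gives $\E(\mathbf{X}) = \sum_{i=1}^{m}(t_i - t_{i+1})\mathbf{\Sigma}^{1/2}\mathbf{E}_i\mathbf{E}'_i\mathbf{\Sigma}^{1/2}$ with $t_i = k_i + a$, so $t_i - t_{i+1} = k_i - k_{i+1}$ for $i<m$ and $t_m - t_{m+1} = k_m + a$; peeling off the $i=m$ term (where $\mathbf{E}_m\mathbf{E}'_m = \mathbf{I}_m$) yields exactly the stated formula, once one reconciles the index in the sum (the statement writes $k_i - k_{i-1}$ over $i=1,\dots,m-1$, which must be read as the reindexed version of $\sum_{i=1}^{m-1}(k_i-k_{i+1})$). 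The type II case is identical up to the sign coming from the $-i$ in Lemma~\ref{lem2}(1) and $\tau = \kappa - a$, giving $t_m - t_{m+1} = k_m - a$ and hence the overall $-(k_m-a)$.

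For the covariance I would substitute $\mathbf{T}=\mathbf{0}$ into the second-derivative formulas. The term $\sum_i (t_i-t_{i+1})^2\Vec\mathbf{A}_i\Vec'\mathbf{A}_i$ together with the two cross sums $\sum\sum_{i\neq j}(t_i-t_{i+1})(t_j-t_{j+1})\Vec\mathbf{A}_i\Vec'\mathbf{A}_j$ reassembles, after symmetrisation by $\mathbf{N}_m$, into the full double sum $\sum_i\sum_j(t_i-t_{i+1})(t_j-t_{j+1})\Vec\mathbf{A}_i\Vec'\mathbf{A}_j = \Vec\!\big(\sum_i(t_i-t_{i+1})\mathbf{A}_i\big)\Vec'\!\big(\sum_j(t_j-t_{j+1})\mathbf{A}_j\big) = \Vec\E(\mathbf{X})\,\Vec'\E(\mathbf{X})$, which is precisely the quantity subtracted when forming $\Cov(\Vec\mathbf{X})$. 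Hence only the last piece, $\pm\sum_{i=1}^{m}(t_i-t_{i+1})(\mathbf{A}_i\otimes\mathbf{A}_i)$ sandwiched between $\mathbf{N}_m$'s, survives; evaluating $\mathbf{A}_i\big|_{\mathbf{T}=\mathbf{0}} = \mathbf{\Sigma}^{1/2}\mathbf{E}_i\mathbf{E}'_i\mathbf{\Sigma}^{1/2}$, separating the $i=m$ term (giving $(k_m+a)\mathbf{N}_m(\mathbf{\Sigma}\otimes\mathbf{\Sigma})$), and reindexing the rest yields the claimed expressions, with the type II sign again traced to the $-$ in front of the Kronecker sum in Lemma~\ref{lem2}(2). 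Throughout one uses $\mathbf{N}_m\Vec\mathbf{A}=\Vec\mathbf{A}$ for symmetric $\mathbf{A}$ (equation~(\ref{eqNm})) to absorb or simplify the bordering $\mathbf{N}_m$ factors where appropriate.

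The main obstacle I anticipate is purely bookkeeping rather than conceptual: correctly recombining the split sums (the ``diagonal'' $i=j$ term plus the two off-diagonal sums) into the single outer-product $\Vec\E(\mathbf{X})\Vec'\E(\mathbf{X})$ while keeping careful track of the factor $\tfrac12$ and the symmetrising action of $\mathbf{N}_m$, and then matching the residual index conventions in the theorem statement (the $k_i-k_{i-1}$ versus $k_i-k_{i+1}$ discrepancy, and the placement of the $i=m$ boundary term $k_m\pm a$). Verifying that $\E(\mathbf{X})$ is genuinely symmetric — so that the reshaping from $\Vec$-form back to matrix form is unambiguous and $\mathbf{N}_m$ acts as the identity on it — is the one structural check worth making explicit; it follows since each $\mathbf{\Sigma}^{1/2}\mathbf{E}_i\mathbf{E}'_i\mathbf{\Sigma}^{1/2}$ is manifestly symmetric.
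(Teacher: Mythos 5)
Your proposal is correct and follows essentially the same route as the paper: evaluate the first and second derivatives from Lemmas \ref{lem1} and \ref{lem2} at $\mathbf{T}=\mathbf{0}$, apply the standard moment--characteristic-function relations, and absorb the $\mathbf{N}_{m}$ factors using symmetry; you simply carry out explicitly the bookkeeping (recombining the split sums into $\Vec\E(\mathbf{X})\Vec'\E(\mathbf{X})$, the $i=m$ boundary term, and the index convention, which is indeed a typo in the statement, $k_{i}-k_{i-1}$ for $k_{i}-k_{i+1}$) that the paper dismisses as immediate.
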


\begin{proof}
Results are immediately from lemmas \ref{lem1} and \ref{lem2}, remembering that:
$$
  \E(\Vec\mathbf{X}) = \left .\frac{\partial \Vec \phi_{_{\mathbf{X}}}(\mathbf{T})}{i\ \partial\Vec
  \mathbf{X}}\right |_{\mathbf{T} = 0}
$$
and
\begin{equation}\label{cov}
    \Cov(\Vec\mathbf{X}) = \E(\Vec \mathbf{X}\Vec' \mathbf{X})-\E(\Vec \mathbf{X})\E(\Vec' \mathbf{X})
\end{equation}
where
$$
  \E(\Vec \mathbf{X}\Vec' \mathbf{X}) = \left .\frac{\partial^{2} \Vec \phi_{_{\mathbf{X}}}(\mathbf{T})}{i^{2}\ \partial\Vec
  \mathbf{X} \partial\Vec' \mathbf{X}}\right |_{\mathbf{T} = 0}.
$$
In order to ensure that $\Cov(\Vec\mathbf{X}) = \Cov'(\Vec\mathbf{X})$ it is necessary that
$$
  \E(\Vec \mathbf{X})\E'(\Vec \mathbf{X})
$$
be a symmetric matrix. Then, proceeding as in the case of $ \E(\Vec \mathbf{X}\Vec' \mathbf{X})
= (\mathbf{B}+\mathbf{B}')/2$, consider the following equivalent definition;
$$
  \Cov(\Vec\mathbf{X}) = \E(\Vec \mathbf{X}\Vec' \mathbf{X})-\frac{1}{2}\{\E(\Vec \mathbf{X})\E(\Vec'
  \mathbf{X})+ [\E(\Vec \mathbf{X})\E(\Vec' \mathbf{X})]'\},
$$
which alternative definition, coincides with (\ref{cov}) when $\E(\Vec \mathbf{X})\E'(\Vec
\mathbf{X})$ is a symmetric matrix and at the same time ensuring  that $\Cov(\Vec\mathbf{X})$
is  a symmetric matrix. \qed
\end{proof}

Observe that in Theorem \ref{Moments}.1 and \ref{Moments}.2, are defined as $a = n/2$,
$\mathbf{\Sigma} \rightarrow 2\mathbf{\Sigma}$ and $\kappa = (0, \dots,0)$ the Wishart case is
obtained. Moreover,
\begin{enumerate}
  \item $\E(\mathbf{X}) = n \mathbf{\Sigma}$,
  \item $\Cov(\Vec \mathbf{X}) = 2n \mathbf{N}_{m}(\mathbf{\Sigma} \otimes \mathbf{\Sigma}) =
  n(\mathbf{I}_{m^{2}}+\mathbf{K}_{m})(\mathbf{\Sigma} \otimes \mathbf{\Sigma})$,
\end{enumerate}
see \citet[p. 90]{m:82} and \citet[p. 253]{mn:88}.

\section{Conclusions}

Observe that if $\mathbf{Y}_{1}, \cdots, \mathbf{Y}_{N}$, are independent $p$-dimensional
random vectors, with $N \geq p$, such that the random matrix $\mathbf{X}$, is defined as:
$$
  \mathbf{X} = \sum_{i = 1}^{N}\mathbf{Y}_{i}\mathbf{Y}'_{i} = \mathbf{Y}'\mathbf{Y}, \quad
  \mathbf{Y}' = (\mathbf{Y}_{1}, \cdots, \mathbf{Y}_{N})
$$
has a Riesz distribution type I, then by multivariate central limit theorem, \citet[p.
15]{m:82} and Theorem \ref{Moments}, if
$$
  \bar{\mathbf{Y}} = \frac{1}{N}\sum_{i = 1}^{N}\mathbf{Y}_{i}
$$
and
$$
  \mathbf{S}(n) = \frac{1}{n}\sum_{i = 1}^{N} (\mathbf{Y}_{i} - \bar{\mathbf{Y}})(\mathbf{Y}_{i} -
  \bar{\mathbf{Y}})', \quad N = n+1
$$
the asymptotic distribution as $n \rightarrow \infty$ of
$$
  n^{1/2}\left[\Vec \mathbf{S}(n) - \frac{(k_{m} + a)}{n}\Vec \mathbf{\Sigma} + \sum_{i = 1}^{m-1}\frac{(k_{i} - k_{i-1})}{n}
      \Vec \left(\mathbf{\Sigma}^{1/2}\right)'\mathbf{E}_{i}\mathbf{E}'_{i}\mathbf{\Sigma}^{1/2}\right]
$$
is:
\begin{footnotesize}
$$
  \mathcal{N}_{m^{2}}\left(\mathbf{0}, \frac{(k_{m} + a)}{n}\mathbf{N}_{m}(\mathbf{\Sigma} \otimes \mathbf{\Sigma})
         + \sum_{i = 1}^{m-1}\frac{(k_{i} - k_{i-1})}{n}\mathbf{N}_{m}\left(\left(\mathbf{\Sigma}^{1/2}\right)'\mathbf{E}_{i}
        \mathbf{E}'_{i}\mathbf{\Sigma}^{1/2}\otimes \left(\mathbf{\Sigma}^{1/2}\right)'\mathbf{E}_{i}\mathbf{E}'_{i}
        \mathbf{\Sigma}^{1/2}\right)\right).
$$
\end{footnotesize}
Note that this asymptotic multivariate normal distribution is singular, moreover, its rank is
$m(m+1)/2$. Also, observe that if $a = n/2$, $\mathbf{\Sigma} \rightarrow 2\mathbf{\Sigma}$ and
$\kappa = (0, \dots,0)$ the asymptotic result is obtained by \citet[pp. 90-91]{m:82} for the
Wishart case. The author is currently studying in detail the distribution of the random matrix
$\mathbf{Y}$ and some of their basic properties.


\end{document}